\numberwithin{equation}{section}
\newtheorem{theorem}{Theorem}[section]
\newtheorem{lemma}[theorem]{Lemma}
\theoremstyle{definition}
\newtheorem{remark}[theorem]{Remark}
\theoremstyle{definition}
\theoremstyle{definition}
\def\dashint{\operatorname%
{\,\,\text{\bf-}\kern-.98em\DOTSI\intop\ilimits@\!\!}}
\begin{document}
\title[Parabolic $k$-Hessian equations]
{Interior estimates of derivatives and a Liouville type theorem for parabolic $k$-Hessian equations}

\author[J. Bao]{Jiguang Bao}
\address[J. Bao]{School of Mathematical Sciences, Beijing Normal University, Laboratory of Mathematics and Complex Systems, Ministry of Education, Beijing 100875, China.}
\email{jgbao@bnu.edu.cn}
\thanks{J. Bao was supported by NSFC (No. 11871102)}

\author[J. Qiang]{Jiechen Qiang}
\address[J. Qiang]{School of Mathematical Sciences, Beijing Normal University, Laboratory of Mathematics and Complex Systems, Ministry of Education, Beijing 100875, China.}
\email{qiangjc@mail.bnu.edu.cn}

\author[Z. Tang]{Zhongwei Tang}
\address[Z. Tang]{School of Mathematical Sciences, Beijing Normal University, Laboratory of Mathematics and Complex Systems, Ministry of Education, Beijing 100875, China.}
\email{tangzw@bnu.edu.cn}
\thanks{Z. Tang was supported by NSFC (Nos. 12071036 and 12126306)}

\author[C. Wang]{Cong Wang}
\address[C. Wang]{School of Mathematical Sciences, Beijing Normal University, Laboratory of Mathematics and Complex Systems, Ministry of Education, Beijing 100875, China.}
\email{cwang@mail.bnu.edu.cn}


\subjclass[2010]{35K55, 35B45, 35B08}

\keywords{Parabolic $k$-Hessian equations, Liouville type theorem, Gradient estimates, Pogorelov estimates.}

\begin{abstract}
 In this paper, we establish the gradient and Pogorelov estimates for $k$-convex-monotone solutions to parabolic $k$-Hessian equations of the form $-u_t\sigma_k(\lambda(D^2u))=\psi(x,t,u)$. We also apply such estimates to obtain a Liouville type result, which states that any $k$-convex-monotone and $C^{4,2}$ solution $u$ to $-u_t\sigma_k(\lambda(D^2u))=1$ in $\mathbb{R}^n\times(-\infty,0]$ must be a linear function of $t$ plus a quadratic polynomial of $x$, under some growth assumptions on $u$.
\end{abstract}
\maketitle

\section{Introduction}

This paper is concerned with the interior estimates of gradient and second derivatives for solutions to parabolic $k$-Hessian equations of the form
\begin{equation}\label{eq:P-k-HESSIAN}
  -u_t\sigma_k(\lambda(D^2u))=\psi(x,t,u)
\end{equation}
in a bounded domain $D\subset\mathbb{R}^n\times(-\infty,0]$, where $D^2u$ is the matrix of second derivatives of $u=u(x,t)$ with respect to $x$, $\lambda(D^2u)$ is the eigenvalue vector $\lambda=(\lambda_1,\cdots,\lambda_n)$ of $D^2u$,
$$\sigma_k(\lambda)=\sum_{1\leq i_1<\cdots< i_k\leq n}\lambda_{i_1}\cdots\lambda_{i_k}$$
is the $k$-th elementary symmetric function, $k=1,\cdots,n$, and $\psi$ is a nonnegative function. For $2\leq k\leq n$, \eqref{eq:P-k-HESSIAN} is an important class of fully nonlinear parabolic equations. Especially, for $k=n$, the left hand of \eqref{eq:P-k-HESSIAN} corresponds to the parabolic Monge-Amp\`{e}re operator of the form $-u_t\det D^2u$, which was first introduced by Krylov \cite{Krylov-1976} and is closely associated with the Gauss-Kronecker curvature flows \cite{Tso-1985}.

A priori estimates for the elliptic $k$-Hessian equations
\begin{equation}\label{eq:k-Hessian}
  \sigma_k(\lambda(D^2u))=\psi(x,u),
\end{equation}
in a bounded domain $\Omega\subset\mathbb{R}^n$, have been researched extensively. For $k=1$, \eqref{eq:k-Hessian} corresponds to $\Delta u=\psi(x,u)$, a linear or quasilinear equation. Its priori estimates have sufficiently developed (see \cite{GT-1983}). For $2\leq k\leq n$, \eqref{eq:k-Hessian} is a fully nonlinear equation. As is well known, its natural ellipticity class is $k$-convex functions. For $k=n$, \eqref{eq:k-Hessian} is the Monge-Amp\`{e}re equation. Interior gradient estimate for Monge-Amp\`{e}re equations is immediate due to the convexity of solutions. For Monge-Amp\`{e}re equations with the homogeneous Dirichlet boundary condition on $\partial\Omega$, Pogorelov \cite{Pogorelov-1978} first introduced Pogorelov estimates. In the context of fully nonlinear elliptic equations, Pogorelov estimates is a significant type of interior second derivatives estimates.
See also \cite{Blocki-2003, Figalli-2017, Gutierrez-2001, JT-2014, LT-2010, Maldonado-2018} for various version of Pogorelov estimates for Monge-Amp\`{e}re equations.   By contrast, it is much harder to handle the intermediate cases $2\leq k\leq n-1$. In \cite{Chou-Wang 2003} and \cite{Trudinger-1997}, the authors proved the interior gradient estimates for $k$-convex solutions. Chou-Wang \cite{Chou-Wang 2003} extended Pogorelov estimates \cite{Pogorelov-1978} to $k$-Hessian equations for $k$-convex solutions, with some general Dirichlet boundary conditions. Afterwards, Warren-Yuan \cite{Warren-Yuan-2009} established interior $C^2$ estimates for $\sigma_2(\lambda(D^2u))=1$ in space dimension $3$. Recently, McGonagle-Song-Yuan \cite{MSY-2019} further derived the interior $C^2$ estimates for semiconvex solutions to $\sigma_2(\lambda(D^2u))=1$. When $\psi$ in \eqref{eq:k-Hessian} depends additionally on the gradient term $Du$, Li-Ren-Wang \cite{LRW2016} obtained Pogorelov estimates for $(k+1)$-convex solutions.

In this paper, we will first establish the interior gradient and Pogorelov estimates for $k$-convex-monotone solutions to \eqref{eq:P-k-HESSIAN}. This extends the previously corresponding results on elliptic $k$-Hessian equations \cite{Chou-Wang 2003}.

To state our results precisely, we recall the following definitions related to parabolic $k$-Hessian equations (see \cite{Lieberman-1996}).

Let $D$ be a bounded open set in $\mathbb{R}^n\times(-\infty,0]$. We define for $t\leq0$, $D(t)=\{x\in\mathbb{R}^n:(x,t)\in D\}$
and $\underline{t}=\inf\{t\leq0:D(t)\neq\emptyset\}$. The parabolic boundary $\partial_pD$ of $D$ is defined by
$$\partial_pD=\left(\overline{D}(\underline{t})\times\{\underline{t}\}\right)\cup\bigcup_{t\leq0}(\partial D(t)\times\{t\}),$$
where $\overline{D}(\underline{t})$ denotes the closure of $D(\underline{t})$, $\partial D(t)$ denotes the boundary of $D(t)$.
For an open set $\Omega$ in $\mathbb{R}^n$, we say that a function $u=u(x)\in C^2(\Omega)$ is $k$-convex, if $\lambda(D^2u(x))\in\overline{\Gamma}_k$ for every $x\in\Omega$, where $\Gamma_k$ is an open convex symmetric cone in $\mathbb{R}^n$ with its vertex at the origin, given by
$$\Gamma_k=\{\lambda\in\mathbb{R}^n: \sigma_j(\lambda)>0,\ \forall\ j=1,\cdots,k\}.$$
We say that a function $u=u(x,t)$ is $k$-convex-monotone, if $u$ is $k$-convex in $x$ and decreasing in $t$.
We denote points in $\overline{D}\times\mathbb{R}$ typically by $(x,t,z)$, where $(x,t)\in\overline{D}$ and $z\in\mathbb{R}$.

The main results of the first part now can be stated as below.

\begin{theorem}\label{thm:gradient-est}
  Let $D=\big\{(x,t)\in\mathbb{R}^n\times(-\infty,0]:|x|^2-r^2<t\big\}$ with $r>0$ being a constant. Let $\psi\geq0$ be Lipschitz continuous in $\overline{D}\times\mathbb{R}$ with respect to $x$ and $z$, respectively. Assume that $u\in C^{4,2}(D)\cap C^{2,1}(\overline{D})$ is a $k$-convex-monotone solution to \eqref{eq:P-k-HESSIAN} satisfying
  \begin{equation}\label{eq:u-t-bdd0}
  -u_t\geq m_1>0\quad\text{in }D,
  \end{equation}
  for some constant $m_1$. Then
  $$|Du(0,0)|\leq C_1+C_2\frac{\sup_D|u|}{r},$$
  where $C_1>0$ is a constant depending only on $n$, $k$, $r$, $m_1$, $\sup_D|u|$, $\sup_{D\times\mathbb{R}}(|\psi_{x}|+|\psi_z|)$, and $C_2>0$ is a constant depending only on $n$, $k$, $m_1$, $\sup_{D\times\mathbb{R}}\psi$. In particular, if $\psi$ is a constant, then $C_1=0$.
\end{theorem}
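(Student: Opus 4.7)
\smallskip

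The plan is to adapt the Chou--Wang auxiliary function method for elliptic $k$-Hessian equations \cite{Chou-Wang 2003} to the parabolic setting. The natural cutoff for the paraboloid $D$ is
\[
\eta(x,t) = r^2 - |x|^2 + t,
\]
which is strictly positive inside $D$, vanishes on the parabolic boundary $\partial_p D$, and equals $r^2$ at $(0,0)$. I would then study the auxiliary function
\[
w(x,t) = \beta\log \eta(x,t) + \varphi(u(x,t)) + \log|Du(x,t)|^{2},
\]
where $\beta>0$ and the scalar function $\varphi$ (to be chosen, e.g.\ $\varphi(u)=A(u-\inf_D u)$ or $\varphi(u)=e^{A(u-\inf_D u)}$ for a large $A$) are calibrated at the end. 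Since $\log\eta\to-\infty$ on $\partial_p D$, the maximum of $w$ over $\overline{D}$ is attained at some point $(x_0,t_0)\in\overline{D}\setminus\partial_p D$, and it suffices to bound $|Du|$ at this point.

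At $(x_0,t_0)$ one has $w_i=0$ and $(w_{ij})\le 0$; moreover either $w_t=0$ (interior in time) or $t_0=0$ is on the ``top'' and $w_t\ge 0$. In both cases, the linearized parabolic operator
\[
\mathcal{L} v := \frac{\sigma_k^{ij}(D^2u)}{\sigma_k(D^2u)}\, v_{ij}\;-\;\frac{v_t}{-u_t},
\]
which is genuinely parabolic thanks to the $k$-convex-monotone assumption together with $-u_t\ge m_1>0$, satisfies $\mathcal{L}w\le 0$ at $(x_0,t_0)$. The key identity comes from differentiating \eqref{eq:P-k-HESSIAN} in $x_\ell$: using $\psi = -u_t\sigma_k$ one obtains
\[
\mathcal{L}u_\ell \;=\; \frac{\psi_{x_\ell}+\psi_z u_\ell}{\psi},
\]
which absorbs the third-order terms hidden in $\mathcal{L}(\log|Du|^2)$ and gives a data-controlled remainder; from $\sigma_k^{ij}u_{ij}=k\sigma_k$ one also has $\mathcal{L}u=k+1$.

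A direct computation yields
\[
\mathcal{L}(\log\eta) = -\frac{2\sum_i\sigma_k^{ii}}{\eta\,\sigma_k} - \frac{4\,\sigma_k^{ij}x_ix_j}{\eta^{2}\sigma_k} - \frac{1}{\eta(-u_t)},
\]
\[
\mathcal{L}(\varphi(u)) = \varphi''(u)\frac{\sigma_k^{ij}u_iu_j}{\sigma_k} + (k+1)\varphi'(u),
\]
and, after the crucial cancellation between the $u_t$-differentiated and $P_t$ contributions,
\[
\mathcal{L}(\log|Du|^{2}) = \frac{2\sigma_k^{ij}u_{\ell i}u_{\ell j}}{\sigma_k |Du|^{2}} + \frac{2u_\ell(\psi_{x_\ell}+\psi_z u_\ell)}{\psi |Du|^{2}} - \frac{\sigma_k^{ij}(|Du|^{2})_i(|Du|^{2})_j}{\sigma_k |Du|^{4}}.
\]
The first-order condition $w_i=0$ is then used to substitute $(|Du|^{2})_i / |Du|^{2} = 2\beta x_i/\eta - \varphi'u_i$, which turns the last term into an explicit positive quadratic form in $(\beta/\eta,\varphi')$ against the $\sigma_k^{ij}$-inner product. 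Combining the three displayed identities, summing and using $\mathcal{L}w\le 0$, one gets an inequality in which the favorable Hessian term $\frac{2\sigma_k^{ij}u_{\ell i}u_{\ell j}}{\sigma_k|Du|^{2}}$ competes with (i) the cutoff terms $-\beta(\sum\sigma_k^{ii})/(\eta\sigma_k)$ and $-\beta^2 \sigma_k^{ij}x_ix_j/(\eta^{2}\sigma_k)$, (ii) the $\psi$-driven term $2 u_\ell \psi_{x_\ell}/(\psi|Du|^{2})$, and (iii) the term $-\beta/(\eta(-u_t))$ that is bounded using $-u_t\ge m_1$.

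By choosing $\beta$ large enough in terms of $n$, $k$ and (for the second constant) $\sup_D|u|/r$, and calibrating $\varphi$ so that $(k+1)\varphi'(u)$ dominates the $\psi$-dependent contributions, the inequality $\mathcal{L}w\le 0$ forces $\eta(x_0,t_0)|Du(x_0,t_0)|\le C$; evaluating at the origin, where $\eta(0,0)=r^2$ and $w(0,0)\le w(x_0,t_0)$ gives the announced bound $|Du(0,0)|\le C_1 + C_2\sup_D|u|/r$. When $\psi$ is a constant, $\psi_{x_\ell}\equiv\psi_z\equiv 0$, so $\mathcal{L}u_\ell\equiv 0$ and the entire inhomogeneous contribution to $C_1$ disappears, yielding $C_1=0$. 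The principal difficulty I anticipate is the careful balancing of the two ``bad'' cutoff contributions $-\beta\sum_i \sigma_k^{ii}/(\eta\sigma_k)$ and $-\beta^2\sigma_k^{ij}x_ix_j/(\eta^2\sigma_k)$ against the single positive Hessian term, which requires exploiting the Maclaurin-type inequality $\sum_i\sigma_k^{ii}\ge c(n,k)\sigma_k^{(k-1)/k}$ and using $k$-convexity to bound $\sigma_k^{ij}x_ix_j$ from above in terms of $\sum_i\sigma_k^{ii}$; this is where the parabolic setting differs most sharply from the elliptic case and where the monotonicity condition \eqref{eq:u-t-bdd0} becomes indispensable.
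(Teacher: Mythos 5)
Your overall strategy (cutoff $\times$ function of $u$ $\times$ a gradient quantity, maximum principle through the linearized parabolic operator, with $-u_t\ge m_1$ taming the extra time term) is the same general route the paper takes, but your sketch is missing the one mechanism that makes the argument work for intermediate $k$, and as written the balancing you describe cannot be closed. All of your ``bad'' terms --- $\beta\sum_i\sigma_k^{ii}/(\eta\sigma_k)$, $\beta^2\sigma_k^{ij}x_ix_j/(\eta^2\sigma_k)\le \beta^2|x|^2\sum_i\sigma_k^{ii}/(\eta^2\sigma_k)$, and $\beta/(\eta(-u_t))$ --- are controlled only in terms of the full trace $\sum_i\sigma_k^{ii}$, while your ``good'' terms ($\varphi''\,\sigma_k^{ij}u_iu_j/\sigma_k=\varphi''\,\sigma_k^{11}u_1^2/\sigma_k$ in adapted coordinates, and $\sigma_k^{ij}u_{\ell i}u_{\ell j}/(\sigma_k|Du|^2)$) are controlled by the single coefficient $\sigma_k^{11}$ in the gradient direction. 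For a general $k$-convex (not convex) solution with $2\le k\le n-1$ there is no a priori comparison $\sigma_k^{11}\gtrsim\sum_i\sigma_k^{ii}$, so Maclaurin plus the trivial bound on $\sigma_k^{ij}x_ix_j$ --- the only tools you invoke for the ``principal difficulty'' --- do not suffice. The missing step, which is the heart of the Chou--Wang gradient estimate and of the paper's proof, is to use the first-order condition at the maximum (after assuming the quantity to be bounded is large, otherwise one is done) to deduce $u_{11}<0$ in the gradient direction, and then invoke $S_k^{11}\ge S_{k-1}\ge\theta_1\sum_iS_k^{ii}$ (see \eqref{eq:H5}, i.e.\ (3.10) of \cite{Chou-Wang 2003}) together with the lower bound $\sum_iS_k^{ii}\ge\theta_3 u_1^{2k-2}/M^{k-1}$ of \eqref{eq:H6}; only then can the positive term of cubic order in $u_1$ absorb the cutoff and $S_k$ terms. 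This mechanism is available in your $\log|Du|^2$ formulation too (the $i=1$ first-order condition gives $2u_{11}/u_1=2\beta x_1/\eta-\varphi'u_1$, which is negative once $\eta\varphi'u_1\gg\beta r$), but your proposal never performs this case split nor uses the resulting lower bound on $\sigma_k^{11}$, so the inequality you aim for does not follow.

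A second, more technical defect: your suggested choices $\varphi(u)=A(u-\inf_Du)$ or $e^{A(u-\inf_Du)}$ have the wrong convexity for the additive--logarithmic test function. After substituting the first-order condition, the negative square term $-\sigma_k^{ij}(|Du|^2)_i(|Du|^2)_j/(\sigma_k|Du|^4)$ produces $-\varphi'^2\sigma_k^{ij}u_iu_j/\sigma_k$ (only half of which is recovered by the Hessian term $2\sigma_k^{ij}u_{\ell i}u_{\ell j}/(\sigma_k|Du|^2)$), so one needs $\varphi''-c\,\varphi'^2>0$; both of your candidates violate this, whereas a choice of the type used in the paper, $\varphi(s)=(M-s)^{-1/2}$ with $M=4\sup_D|u|$ (satisfying $\varphi''-2\varphi'^2/\varphi\ge\tfrac1{16}M^{-5/2}$), or $-a\log(M-s)$ with $a$ small in the additive setting, is required. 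For comparison, the paper avoids these issues by working multiplicatively with the directional derivative, $G=\rho\,\varphi(u)\,u_\xi$ with $\rho=1-(|x|^2-t)/r^2$, for which the assumption $\rho u_1\ge 10M/r$ immediately yields $u_{11}<0$ and hence \eqref{eq:H5}--\eqref{eq:H6}, and then a single application of Young's inequality closes the estimate; the vanishing of $C_1$ for constant $\psi$ comes out exactly as you predict.
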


\begin{theorem}\label{thm:Pogorelov}
  Let $D$ be a bounded domain in $\mathbb{R}^n\times(-\infty,0]$. Let $\psi\geq\psi_0>0$ in $\overline{D}\times\mathbb{R}$ for some constant $\psi_0$, which satisfies $\psi_x(\psi_z)$ is Lipschitz continuous with respect to $x(z)$. Let $u\in C^{4,2}(D)\cap C^{2,1}(\overline{D})$ be a $k$-convex-monotone solution to \eqref{eq:P-k-HESSIAN}
  satisfying \eqref{eq:u-t-bdd0}. Assume that there is a $k$-convex-monotone function $w\in C^{2,1}(\overline{D})$ such that
  $$w\geq u\ \text{in }D\quad\text{and}\quad w=u\ \text{on } \partial_pD.$$
  Then
  $$(w-u)^4|D^2u|\leq C\quad\text{in }D,$$
  where $C>0$ is a constant depending only on $n$, $k$, $m_1$, $\sup_D|w-u|,\sup_D|D(w-u)|$, $\psi_0$ and $\sup_{D\times\mathbb{R}}(\psi+|\psi_x|+|\psi_z|+|\psi_{xx}|+|\psi_{zz}|)$.
\end{theorem}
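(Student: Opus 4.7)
The plan is to prove Theorem~\ref{thm:Pogorelov} by a parabolic adaptation of Chou--Wang's Pogorelov test-function argument for the elliptic $k$-Hessian equation. By Theorem~\ref{thm:gradient-est} the gradient $|Du|$ is already under control, so throughout the argument it may be treated as a bounded quantity. I would consider
\[
W(x,t,\xi) := \beta\log(w-u) + \log u_{\xi\xi} + \tfrac{a}{2}|Du|^2, \qquad (x,t,\xi)\in \overline{D}\times\mathbb{S}^{n-1},
\]
where $\beta,a>0$ are to be fixed later. Since $w-u\equiv 0$ on $\partial_p D$ while $u_{\xi\xi}$ is bounded there by the $C^{2,1}(\overline{D})$ regularity of $u$, $W\to -\infty$ on $\partial_p D\times\mathbb{S}^{n-1}$, so its maximum is attained at some interior $(x_0,t_0,\xi_0)$. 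After rotating so that $D^2u(x_0,t_0)$ is diagonal with largest eigenvalue $u_{11}=u_{\xi_0\xi_0}$, I would work with $\widetilde W := \beta\log(w-u) + \log u_{11} + \tfrac{a}{2}|Du|^2$, and I may assume $u_{11}\geq 1$.

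The natural parabolic linearization
\[
\mathcal{L}v := \frac{\sigma_k^{ij}}{\sigma_k}\,v_{ij} - \frac{v_t}{-u_t}
\]
is degenerate parabolic on the solution (strictly so thanks to $-u_t\geq m_1$), and the critical-point conditions $D\widetilde W=0$, $\widetilde W_t\geq 0$, $D^2\widetilde W\leq 0$ at the maximum give $\mathcal{L}\widetilde W\leq 0$. Three ingredients feed into this. Differentiating the equation in its logarithmic form $\log(-u_t)+\log\sigma_k=\log\psi$ twice in the $e_1$-direction and applying the concavity inequality $\sigma_k^{ij,pq}u_{ij1}u_{pq1}\leq (1-1/k)(\sigma_k^{ij}u_{ij1})^2/\sigma_k$ for $\sigma_k^{1/k}$ on $\Gamma_k$ yields
\[
\mathcal{L}(\log u_{11}) \geq -\frac{\sigma_k^{ii}u_{11,i}^2}{\sigma_k u_{11}^2} - \frac{C}{u_{11}}.
\]
Using the Euler identity $\sigma_k^{ij}u_{ij}=k\sigma_k$ together with the regularity and $k$-convex-monotonicity of $w$ gives, with $\eta:=w-u$,
\[
\mathcal{L}(\log\eta) = \frac{1}{\eta}\Bigl[\frac{\sigma_k^{ij}w_{ij}}{\sigma_k}-k-\frac{w_t}{-u_t}-1\Bigr] - \frac{\sigma_k^{ii}\eta_i^2}{\sigma_k\eta^2}.
\]
Finally, differentiating the equation once and observing that the $u_t$-contributions cancel exactly produces the clean identity
\[
\mathcal{L}(|Du|^2) = 2\,\frac{\sigma_k^{ii}u_{ii}^2}{\sigma_k} + 2\sum_k u_k\,\frac{\psi_{x_k}+\psi_z u_k}{\psi},
\]
whose first summand provides the crucial positive term.

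Combining the three pieces, the bad quadratic forms $\sigma_k^{ii}u_{11,i}^2/(\sigma_k u_{11}^2)$ and $\beta\sigma_k^{ii}\eta_i^2/(\sigma_k\eta^2)$ are converted, via the first-order relation $u_{11,i}/u_{11} + \beta\eta_i/\eta + a u_i u_{ii}=0$ at $(x_0,t_0)$, squaring, and Cauchy--Schwarz, into multiples of $\sigma_k^{ii}u_i^2 u_{ii}^2/\sigma_k$. Since $|Du|$ is bounded, these are then dominated by the positive term $a\sigma_k^{ii}u_{ii}^2/\sigma_k$ from $\mathcal{L}(|Du|^2)$ provided $a$ is taken large relative to $\beta^2$. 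The residual zeroth-order terms are handled using $\sigma_k=\psi/(-u_t)\leq \sup\psi/m_1$ together with the assumed bounds on $w$ and on $\psi$ and its derivatives. With $\beta=4$, the surviving inequality at the maximum reduces to $(w-u)^4 u_{11}\leq C$, and hence the same inequality holds everywhere in $D$; the passage from the maximum eigenvalue $u_{11}$ to $|D^2u|$ follows from $\lambda(D^2u)\in\overline{\Gamma}_k\subset\overline{\Gamma}_1$, which forces $|\lambda_i|\leq (n-1)u_{11}$ for every $i$.

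The main obstacle is the algebraic bookkeeping in the absorption step: the critical-point identity mixes gradients of $u_{11}$, $\eta$, and $|Du|^2$, and one must choose $a\gg\beta^2$ to kill the $a^2\sigma_k^{ii}u_i^2u_{ii}^2/\sigma_k$ remainders before fixing $\beta=4$ to produce the stated exponent. A minor subtlety is that the maximum may lie on the top time-slice $t_0=0$ of $D$, where only $\widetilde W_t\geq 0$ (not $=0$) holds; this enters $\mathcal{L}\widetilde W\leq 0$ with the favorable sign and requires no further adjustment. Compared with the elliptic Pogorelov estimate of Chou--Wang, the parabolic structure here is actually a net simplification: $-u_t\geq m_1>0$ makes $\mathcal{L}$ uniformly parabolic in $t$ and forces the quantitative upper bound $\sigma_k\leq\sup\psi/m_1$, while the $u_t$-derivatives cancel exactly in $\mathcal{L}(|Du|^2)$.
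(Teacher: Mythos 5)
Your overall framework (interior maximum of a Pogorelov-type quantity, the linearized operator $\mathcal{L}$, the identities for $\mathcal{L}(\log(w-u))$ and $\mathcal{L}(|Du|^2)$, the sign discussion at a maximum on the slice $t=0$, and the use of $-u_t\geq m_1$ to bound $S_k$) matches the paper's strategy, which works with $(w-u)^4\varphi(|Du|^2/2)u_{\xi\xi}$ and the linearization of $(-u_t)^{1/k}S_k^{1/k}=\psi^{1/k}$. However, the central absorption step of your argument does not close. The critical-point condition gives only the single relation $u_{11i}/u_{11}+\beta\,\eta_i/\eta+a\,u_iu_{ii}=0$ for each $i$, so you can trade one of the two bad quadratic forms $\sigma_k^{ii}u_{11i}^2/(\sigma_ku_{11}^2)$ and $\beta\,\sigma_k^{ii}\eta_i^2/(\sigma_k\eta^2)$ for the other two quantities, but you cannot convert \emph{both} into multiples of $\sigma_k^{ii}u_i^2u_{ii}^2/\sigma_k$: eliminating $u_{11i}$ leaves a term $(\beta+2\beta^2)\sigma_k^{ii}\eta_i^2/(\sigma_k\eta^2)$, and for $2\leq k\leq n-1$ this is not dominated by $a\,\sigma_k^{ii}u_{ii}^2/\sigma_k$, because the largest coefficient $\sigma_k^{nn}$ may pair with $u_{nn}$ equal to zero (or negative) while $\eta_n\neq0$; quantitatively, $\sum_i\sigma_k^{ii}/\sigma_k\sim\sigma_{k-1}/\sigma_k$ can grow like $u_{11}^{k-1}(-u_t)$ and is multiplied by $1/\eta^2$, while the good term is only of order $u_{11}$ in such configurations, so no choice of $a,\beta$ saves the inequality. (Your bookkeeping is also reversed: to beat the remainder $2a^2u_i^2u_{ii}^2\leq 2a^2|Du|^2u_{ii}^2$ with the good term $a\,u_{ii}^2$ you need $a$ \emph{small}, of order $|Du|^{-2}$, not $a\gg\beta^2$.)

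This is precisely where the paper, following \cite{Chou-Wang 2003}, does more: it keeps the third-order term $\frac1k\sigma_k^{\frac1k-1}\sigma_{k-2;ij}u_{1ij}^2/u_{11}$ coming from the concavity inequality (you discard it the moment you write $\mathcal{L}(\log u_{11})\geq-\sigma_k^{ii}u_{11i}^2/(\sigma_ku_{11}^2)-C/u_{11}$) and then splits into the cases $u_{kk}\geq\varepsilon u_{11}$ and $u_{kk}\leq\varepsilon u_{11}$. In the first case $\sum_iF_{ii}u_{ii}^2\geq\theta\,u_{11}^2\sum_iF_{ii}$, so the full trace multiplies $u_{11}^2$ and controls the $\sum_iF_{ii}/(w-u)^2$ term, and the bound $\sum_iF_{ii}\gtrsim f^{1-k}u_{11}^{k-1}(-u_t)^{1-\frac1k}$ converts the zeroth-order terms; in the second case the retained $\sigma_{k-2;ij}$ terms absorb $\sum_{i\geq2}F_{ii}u_{11i}^2/u_{11}^2$ for $\varepsilon$ small, and the inequality $\lambda_1\sigma_{k-1;1}(\lambda)\geq\theta\sigma_k(\lambda)$ finishes. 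Without this case analysis (or a genuine substitute) your proof has a gap at its decisive step. A secondary point: you cannot invoke Theorem \ref{thm:gradient-est} to control $|Du|$ here, since that theorem is an interior estimate at the center of a specific paraboloid-shaped domain and does not bound the gradient on a general bounded $D$ up to $\partial_pD$; in the paper the gradient enters only through the bounded weight $\varphi(|Du|^2/2)$.
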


\begin{remark}
  In Theorem \ref{thm:Pogorelov}, if $u$ takes constant boundary data $u_0$ on $\partial_pD$, then we can take $w\equiv u_0$ in $\overline{D}$ by the strong maximum principle for parabolic $k$-Hessian equations (see for instance \cite{Lieberman-1996}).
\end{remark}

For Theorem \ref{thm:gradient-est} and Theorem \ref{thm:Pogorelov}, the basic idea of the proofs is to construct proper auxiliary functions and take advantage of the associated linearized operator, which is analogous to those presented in the related works, for instance, \cite{Blocki-2003,Chou-Wang 2003,Guan-1997,Trudinger-1997}. Such interior estimates are delicate since the auxiliary function must be chosen carefully, see \cite[Chapter 17]{GT-1983} for detailed explanations. Based on the work of \cite{Chou-Wang 2003}, we consider the auxiliary function involving the term $t$ and finally derive the estimates.

The interior estimates of derivatives play a dominant role in investigating the regularity and Liouville property of solutions, which are fundamental problems for associated equations. The second part of this paper is concerned with the Liouville property of solutions to parabolic $k$-Hessian equations
\begin{equation}\label{eq:psigma_k=1}
   -u_t\sigma_k(\lambda(D^2u))=1\quad\text{in }\mathbb{R}^n\times(-\infty,0].
\end{equation}

We review some known results related to the elliptic $k$-Hessian equations
\begin{equation}\label{eq:sigma_k=1}
\sigma_k(\lambda(D^2u))=1\quad\text{in }\mathbb{R}^n.
\end{equation}
For $k=1$, the classical Liouville theorem for harmonic functions illustrates that any convex classical solutions to \eqref{eq:sigma_k=1} must be quadratic. For $k=n$, the celebrated theorem to J\"{o}rgens \cite{Jorgens1954}, Calabi \cite{Calabi1958} and Pogorelov \cite{P 1972} for the Monge-Amp\`{e}re equation states that any convex classical solution to \eqref{eq:sigma_k=1} must be quadratic.
Bao-Chen-Guan-Ji \cite{Bao2003} proved that any strictly convex solution to \eqref{eq:sigma_k=1} with a lower quadratic growth condition is quadratic.
Very recently, Du \cite{Du-2022} proved three necessary and sufficient conditions of Liouville property for convex solutions to \eqref{eq:sigma_k=1}.
For $k=2$, Chang-Yuan \cite{CY2010} demonstrated that if $u$ is a solution to \eqref{eq:sigma_k=1} with $D^2u\geq\left(\delta-\sqrt{\frac{2}{n(n-1)}}\right)I$
for some $\delta>0$, then $u$ is quadratic. Recently, Shankar-Yuan \cite{SY2021} improved this result to general semiconvex solutions, namely $D^2u\geq -KI$
for a large $K>0$. Li-Ren-Wang \cite{LRW2016} reduced the strict convexity restriction in \cite{Bao2003} to $(k+1)$-convexity.
In the above works \cite{Bao2003,CY2010,Du-2022,LRW2016,SY2021}, the authors dealt with solutions satisfying the restricted $(k+1)$-convexity, $n$-convexity or its variations.
However, as far as we know, little is known about the Liouville type results for general $k$-convex solutions to \eqref{eq:sigma_k=1}. We refer to \cite{B 2004, Wang-Bao 2022}. Wang-Bao \cite{Wang-Bao 2022} obtained such a result under an additional upper quadratic growth condition lately.

Contrast to the elliptic $k$-Hessian case, there are less known Liouville type results for the parabolic $k$-Hessian equations, especially for $k$-convex-monotone solutions. Guti\'{e}rrez-Huang \cite{GH-1998} extended the J\"{o}rgens-Calabi-Pogorelov theorem to the parabolic setting
\begin{equation}\label{eq:PMA}
  -u_t\det D^2u=1.
\end{equation}
In \cite{Wang-Bao-2015} and \cite{Xiong-Bao-2011}, the authors showed the Liouville type theorem for another two kinds of parabolic Monge-Amp\`{e}re equations
$$-u_t=(\det D^2u)^{\frac{1}{n}}$$
and
$$-u_t+\log\det(D^2u)=1,$$
respectively. For general $k$, Nakamori-Takimoto \cite{Nakamori 2015} proved the Liouville type theorem for convex-monotone solutions to \eqref{eq:psigma_k=1} with a lower quadratic growth condition. More recently, He-Sheng-Xiang-Zhang \cite{HSXZ-2022} improved this result to $(k+1)$-convex-monotone solutions. In the current work, we make an attempt to obtain a Liouville type result for $k$-convex-monotone solutions to \eqref{eq:psigma_k=1}, when the solution further satisfies an upper quadratic growth condition.

The main result of our second part is the following.

\begin{theorem}\label{thm:main}
  Let $u\in C^{4,2}(\mathbb{R}^n\times(-\infty,0])$ be a $k$-convex-monotone solution to \eqref{eq:psigma_k=1}. Suppose that there exist positive constants $m_2\geq m_1$ such that
  \begin{equation}\label{eq:u-t-bdd}
     m_1\leq-u_t\leq m_2\quad\text{in }\mathbb{R}^n\times(-\infty,0],
  \end{equation}
  and that there exist positive constants $A_1$, $A_2$ and $B$ such that
  \begin{equation}\label{eq:quadratic}
  A_1|x|^2\leq u(x,0)\leq A_2|x|^2+B,\quad\forall\ x\in\mathbb{R}^n.
  \end{equation}
  Then $u$ has the form $u(x,t)=-mt+p(x)$, where $m>0$ is a constant and $p(x)$ is a quadratic polynomial.
\end{theorem}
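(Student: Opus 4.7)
The plan is to combine Theorems \ref{thm:gradient-est} and \ref{thm:Pogorelov} with a parabolic blow-down and the parabolic Evans-Krylov theorem to force $D^2u$ to be constant on $\mathbb{R}^n\times(-\infty,0]$, after which the desired form follows from a brief algebraic computation. Integrating \eqref{eq:u-t-bdd} in $t$ and combining with \eqref{eq:quadratic} first gives the two-sided bound
\[
A_1|x|^2 + m_1|t| \;\le\; u(x,t) \;\le\; A_2|x|^2 + m_2|t| + B.
\]
Applying Theorem \ref{thm:gradient-est} after translating to an arbitrary base point $(y,t_0)$ on a paraboloid $\{|x-y|^2 - r^2 < t - t_0\}\cap\{t\le 0\}$ (note that $\psi\equiv 1$ forces $C_1 = 0$), and optimizing with $r \sim 1 + |y| + \sqrt{|t_0|}$, I obtain the pointwise gradient estimate $|Du(y,t_0)| \le C(1 + |y| + \sqrt{|t_0|})$ with $C$ independent of $(y,t_0)$.

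For a global Hessian bound, I rescale parabolically: for $R \ge 1$, set $u_R(x,t) := R^{-2}u(Rx, R^2 t)$, which again solves \eqref{eq:psigma_k=1} with $m_1 \le -(u_R)_t \le m_2$ and $A_1|x|^2 \le u_R(x,0) \le A_2|x|^2 + B/R^2$. Fix $M$ large. The sublevel set $\{u_R<M\}$ is contained in $\{|x|\le\sqrt{M/A_1}\}\times\{|t|\le M/m_1\}$ uniformly in $R$, and applying the preceding gradient bound to $u_R$ shows $\sup_{\{u_R<M\}}|Du_R|$ is bounded independently of $R$. The constant function $w\equiv M$ is a $k$-convex-monotone upper barrier that matches $u_R$ on $\partial_p\{u_R<M\}$, so Theorem \ref{thm:Pogorelov} applied to $u_R$ (since $\psi\equiv 1$ trivializes every $\psi$-term in the dependence list) yields
\[
(M - u_R)^4\,|D^2 u_R| \;\le\; C_M \quad\text{in } \{u_R < M\},
\]
with $C_M$ independent of $R$. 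For any $(X,T)\in\mathbb{R}^n\times(-\infty,0]$, choosing $R \ge \max\{1,\sqrt{2(A_2|X|^2 + m_2|T| + B)/M}\}$ gives $u_R(X/R, T/R^2) = R^{-2}u(X,T) \le M/2$ by the upper quadratic bound, so
\[
|D^2 u(X,T)| = |D^2 u_R(X/R, T/R^2)| \;\le\; 16\,C_M/M^4,
\]
which is a uniform global bound on $|D^2u|$.

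Since $\sigma_k(\lambda(D^2u)) = 1/(-u_t) \in [1/m_2, 1/m_1]$ and $|D^2u|$ is bounded, the Newton-Maclaurin inequalities place $\lambda(D^2u)$ in a compact subset of $\Gamma_k$. Rewriting \eqref{eq:psigma_k=1} as $u_t = -1/\sigma_k(\lambda(D^2u))$, the right-hand side is concave in $D^2u$ on $\Gamma_k$, so the parabolic Evans-Krylov theorem applies uniformly to each $u_R$ on the fixed parabolic cylinder $Q_{1/2} := B_{1/2}(0)\times(-1/4,0]$, yielding $[D^2 u_R]_{C^\alpha(Q_{1/2})} \le C$ uniformly in $R\ge 1$. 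The parabolic rescaling $(X,T) = (Rx,R^2 t)$ then gives
\[
[D^2 u]_{C^\alpha(Q_{R/2}(0,0))} = R^{-\alpha}\,[D^2 u_R]_{C^\alpha(Q_{1/2})} \;\le\; C R^{-\alpha} \;\to\; 0 \quad\text{as } R\to\infty.
\]
Since any two points of $\mathbb{R}^n\times(-\infty,0]$ eventually lie in $Q_{R/2}(0,0)$, this forces $D^2u \equiv A$ for some constant matrix $A$ with $\lambda(A)\in\Gamma_k$. Writing $u(x,t) = \tfrac12 x^\top A x + b(t)\cdot x + c(t)$ and substituting into \eqref{eq:psigma_k=1} then forces $b'(t)\equiv 0$ and $-c'(t)\sigma_k(\lambda(A)) = 1$, giving $u(x,t) = -mt + p(x)$ with $m = 1/\sigma_k(\lambda(A)) \in [m_1, m_2]$ and $p$ a quadratic polynomial. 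The main obstacle is Step 2: verifying that the Pogorelov constant $C_M$ is genuinely independent of $R$. This hinges on every quantity in the dependence list of Theorem \ref{thm:Pogorelov} --- namely $m_1$, $\sup|w-u_R|=M$, $\sup|D(w-u_R)|=\sup|Du_R|$, $\psi_0=1$, and $\sup(\psi+|\psi_x|+|\psi_z|+|\psi_{xx}|+|\psi_{zz}|)=1$ --- being controlled uniformly in $R\ge 1$, together with the admissibility of the degenerate constant barrier $w\equiv M$ (whose Hessian vanishes, so $\lambda=0\in\overline{\Gamma}_k$ and $w_t=0\le 0$); the subsequent uniform Evans-Krylov step is by now routine once the global $C^{1,1}$ bound and the $\Gamma_k$-compactness of $\lambda(D^2u)$ are in hand.
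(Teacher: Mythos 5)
Your proposal is correct and takes essentially the same route as the paper: parabolic rescaling, the gradient estimate of Theorem \ref{thm:gradient-est} (with $C_1=0$ since $\psi\equiv1$), the Pogorelov estimate of Theorem \ref{thm:Pogorelov} applied on a sublevel set with a constant $k$-convex-monotone barrier, the Evans--Krylov result (Lemma \ref{thm:Evans-Krylov}) on a fixed cylinder, and the $R^{-\alpha}$ decay of the parabolic $C^\alpha$ seminorm of $D^2u$ as $R\to\infty$. The only differences are cosmetic: the paper works with $v=(u(Rx,R^2t)-R^2)/R^2$, which vanishes on $\partial_p\Omega_R$, and bounds $|D^2v|$ only on a fixed cylinder, whereas you keep $u_R=R^{-2}u(Rx,R^2t)$, use the level-$M$ sublevel set with barrier $w\equiv M$, and extract a uniform global Hessian bound before invoking Evans--Krylov.
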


\begin{remark}
  When $k=n$, \eqref{eq:psigma_k=1} corresponds to the parabolic Monge-Amp\`{e}re equation \eqref{eq:PMA}. By Theorem 1.1 in \cite{GH-1998}, the assumption \eqref{eq:quadratic} can be removed.
\end{remark}

We note that it is expected and intrinsic to treat the interior estimates of derivatives and Liouville property of $k$-convex-monotone solutions, due to the natural parabolic-class for parabolic $k$-Hessian equations.

The paper is organized as follows. In Section \ref{Sec2} and Section \ref{Sec3}, we derive the interior gradient estimates and Pogorelov estimates for parabolic $k$-convex solutions to \eqref{eq:P-k-HESSIAN}, respectively. In Section \ref{Sec4}, we give the proof of Liouville type theorem (Theorem \ref{thm:main}).

We fix some notations throughout this paper. We denote $B_r$ as the ball in $\mathbb{R}^n$ centered at $0$ of radius $r$.
For $u$ defined in a domain $D$ in $\mathbb{R}^n\times(-\infty,0]$, we denote $D^2u(x,t)=(D_{ij}u(x,t))$ the matrix of second derivatives of $u$ with respect to $x$, $Du(x,t)=(D_1u(x,t),\cdots,D_nu(x,t))$ the gradient of $u$ with respect to $x$, $u_t(x, t)$ the derivative of $u$ with respect to $t$. For simplicity, we further denote $D_iu$ by $u_i$, $D_{ij}u$ by $u_{ij}$ and so on. We write $S_k=S_k(D^2u)=\sigma_k(\lambda(D^2u))$, and use the Einstein summation convention when there is no ambiguity.


\section{Interior gradient estimates for $k$-convex-monotone solutions}\label{Sec2}

In this section, we derive a specific form of an interior gradient estimate for $k$-convex-monotone solutions to equation \eqref{eq:P-k-HESSIAN}, which will be used to prove the Liouville type theorem in the last section. The estimate is also of independent interest. Our derivation is similar in spirit to that in \cite{Chou-Wang 2003} where the $k$-Hessian equations are considered.

\begin{proof}[Proof of Theorem \ref{thm:gradient-est}]
  We assume $u\not\equiv0$ in $D$. We consider the auxiliary function, for $(x,t)\in\overline{D}$ and $\xi\in\partial B_1$,
  $$G(x,t;\xi)=\rho(x,t)\varphi(u(x,t))u_{\xi}(x,t),$$
  where $\rho(x,t)=1-\frac{|x|^2-t}{r^2}$, $\varphi(s)=(M-s)^{-\frac{1}{2}}$ and $M=4\sup_D|u|$. Assume $G$ attains its maximum at $(x_0,t_0;\xi_0)$. It suffices to prove
  \begin{equation}\label{eq:G}
  \rho(x_0,t_0)u_{\xi_0}(x_0,t_0)\leq C_1+C_2\frac{M}{r}.
  \end{equation}
  We may assume $(x_0,t_0)\in\overline{D}\setminus\partial_pD$, as otherwise \eqref{eq:G} becomes trivial. We may also assume $\xi_0=(1,0,\cdots,0)$.
  Clearly, $u_i(x_0,t_0)=0$ for $i=2,\cdots,n$.

  We next employ the function
  $$H(x,t)=\rho(x,t)\varphi(u(x,t))u_1(x,t)\quad\text{for }(x,t)\in\overline{D}.$$
  It follows that, at $(x_0,t_0)$,
  \begin{equation}\label{eq:DH}
    0=H_i=\rho\varphi u_{1i}+\rho\varphi'u_1u_i+\rho_i\varphi u_1,
  \end{equation}
  \begin{equation}\label{eq:H_t}
    0\leq H_t=\rho\varphi u_{1t}+\rho\varphi'u_1u_t+\rho_t\varphi u_1,
  \end{equation}
  and the Hessian matrix
  \begin{equation}\label{eq:D^2H}
  \begin{split}
    \{H_{ij}\}= &\ \big\{\rho\varphi u_{1ij}+\rho\varphi'u_1u_{ij}+\rho\varphi''u_1u_iu_j+\rho_{ij}\varphi u_1 \\
    & +\rho\varphi'(u_{1i}u_j+u_{1j}u_i)+\varphi(\rho_ju_{1i}+\rho_iu_{1j})+\varphi'u_1(\rho_ju_i+\rho_iu_j)\big\}
  \end{split}
  \end{equation}
  is nonpositive definite, for $i,j=1,2,\cdots,n$. Let $L$ be the linearized parabolic $k$-Hessian operator associated with $u$ at $(x_0,t_0)$ defined by
  $$L=\frac{\psi(x_0,t_0,u(x_0,t_0))}{u_t(x_0,t_0)}\frac{\partial}{\partial t}-u_t(x_0,t_0)S_k^{ij}(D^2u(x_0,t_0))D_{ij},$$
  where
  $$S_k^{ij}=S_k^{ij}(D^2u)=\frac{\partial S_k}{\partial u_{ij}}(D^2u).$$
  In the rest of the proof, all the calculations are made at $(x_0,t_0)$. By \eqref{eq:H_t} and \eqref{eq:D^2H}, direct calculation gives
  \begin{equation}\label{eq:LH}
    \begin{split}
       0\geq &\ LH \\
         = &\ \rho\varphi\psi\frac{u_{1t}}{u_t}+\rho\varphi'\psi u_1+\varphi\psi u_1\frac{\rho_t}{u_t}-u_t\big(\rho\varphi S_k^{ij}u_{1ij}+\rho\varphi'u_1S_k^{ij}u_{ij}+\rho\varphi''u_1S_k^{ij}u_iu_j \\
          &\ +\varphi u_1S_k^{ij}\rho_{ij}+2\varphi'u_1S_k^{ij}\rho_iu_j+2S_k^{ij}u_{1i}(\rho\varphi'u_j+\varphi\rho_j)\big).
    \end{split}
  \end{equation}
  Differentiating \eqref{eq:P-k-HESSIAN} with respect to $x_1$ yields
  \begin{equation}\label{eq:H0}
   -u_{1t}S_k-u_tS_k^{ij}u_{1ij}=\partial_1\psi:=\psi_{x_1}+\psi_zu_1.
  \end{equation}
  By \eqref{eq:DH}, we have
  \begin{equation}\label{eq:H1}
    u_{1i}=-\frac{u_1}{\rho\varphi}(\rho\varphi'u_i+\rho_i\varphi)\quad\text{for }i=1,2,\cdots,n.
  \end{equation}
  Since $S_k$ is homogeneous of degree $k$, we have
  \begin{equation}\label{eq:k-homo}
    S_k^{ij}u_{ij}=kS_k=-\frac{k\psi}{u_t}.
  \end{equation}
  Substituting \eqref{eq:H0}-\eqref{eq:k-homo} into \eqref{eq:LH}, we obtain
  \begin{equation}\label{eq:H2}
  \begin{split}
    0\geq &\ \rho\varphi\partial_1\psi+(k+1)\rho\varphi'\psi u_1+\varphi\psi u_1\frac{\rho_t}{u_t}-u_t\bigg(\rho\bigg(\varphi''-\frac{2\varphi'^2}{\varphi}\bigg)u_1S_k^{ij}u_iu_j \\
    &\ +\varphi u_1S_k^{ij}\rho_{ij}-2\varphi'u_1S_k^{ij}\rho_iu_j
    -\frac{2\varphi u_1}{\rho}S_k^{ij}\rho_i\rho_j\bigg).
    \end{split}
  \end{equation}
  It is easy to see that $\varphi'>0$, $\varphi''-\frac{2\varphi'^2}{\varphi}\geq\frac{1}{16}M^{-\frac{5}{2}}$,
  and
  $$\quad\, \, \, ~~~~S_k^{ij}u_iu_j=S_k^{11}u_1^2,\quad\,\, S_k^{ij}\rho_{ij}=\frac{-2}{r^2}\sum_{i=1}^nS_k^{ii},$$
  $$S_k^{ij}\rho_iu_j\leq\frac{2u_1}{r}\sum_{i=1}^nS_k^{ii},\quad S_k^{ij}\rho_i\rho_j\leq\frac{4}{r^2}\sum_{i=1}^nS_k^{ii}.$$
  Multiplying \eqref{eq:H2} by $M^{\frac{5}{2}}$, we thus get
  \begin{equation}\label{eq:H3}
    0\geq-32\rho M^{2}|\partial_1\psi|-\frac{32S_k M^2u_1}{r^2}-u_t\bigg(\rho S_k^{11}u_1^3-256\sum_{i=1}^nS_k^{ii}\bigg(\frac{M^2u_1}{r^2}+\frac{Mu_1^2}{r}+\frac{M^2u_1}{\rho r^2}\bigg)\bigg).
  \end{equation}

  To prove \eqref{eq:G}, we will assume $\rho u_1\geq\frac{10M}{r}$.
  Utilizing \eqref{eq:H1}, we get
  $$u_{11}\leq-\frac{\varphi'}{\varphi}u_1^2+\frac{2u_1}{\rho r}\leq-\frac{\varphi'}{2\varphi}u_1^2<0.$$
  With $u_{11}<0$, there holds (see \cite[(3.10)]{Chou-Wang 2003})
  \begin{equation}\label{eq:H5}
    S_k^{11}\geq S_{k-1}=\theta_1\sum_{i=1}^nS_k^{ii}.
  \end{equation}
  Here and in the following, we denote $\theta_j$ $(j\geq1)$ some positive constant depending only on $n$ and $k$. Also, the Maclaurin inequality gives
  \begin{equation}\label{eq:H5-1}
    S_k^{11}\geq S_{k-1}\geq\theta_2 S_k^{\frac{k-1}{k}}.
  \end{equation}
  From the proof of \cite[Theorem 3.2]{Chou-Wang 2003}, we see that
  \begin{equation}\label{eq:H6}
    \sum_{i=1}^nS_k^{ii}\geq\theta_3\frac{u_1^{2k-2}}{M^{k-1}}\geq\theta_4\frac{M^{k-1}}{r^{2k-2}}.
  \end{equation}
  Multiplying \eqref{eq:H3} by $\frac{\rho^2}{(-u_t)S_k^{11}}$, and putting \eqref{eq:u-t-bdd0} and \eqref{eq:H5}-\eqref{eq:H6} into \eqref{eq:H3}, we obtain
  \begin{equation}\label{eq:H4}
  \begin{split}
      \rho^3u_1^3\leq &\ \frac{32\rho^3M^{2}|\partial_1\psi|}{(-u_t)S_k^{11}}+\frac{32\rho M^2u_1S_k}{(-u_t)r^2S_k^{11}}
     +\frac{256\sum\limits_{i=1}^nS_k^{ii}}{S_k^{11}}\bigg(\frac{\rho^2M^2u_1}{r^2}+\frac{\rho^2Mu_1^2}{r}+\frac{\rho M^2u_1}{r^2}\bigg) \\
      \leq &\ C_1(1+\rho u_1)+C_2\bigg(\frac{\rho M^2u_1}{r^2}+\frac{\rho^2Mu_1^2}{r}\bigg) \\
     \leq &\ C_1+\frac{\rho^3u_1^3}{2}+C_2\frac{M^3}{r^3}, \\
   \end{split}
  \end{equation}
  where the last ``$\leq$'' comes from the Young's inequality. When $\psi\equiv\text{const}$, $\partial_1\psi$ vanishes in \eqref{eq:H4}, and so does $C_1$. This completes the proof of Theorem \ref{thm:gradient-est}.
\end{proof}

\section{Pogorelov estimates for $k$-convex-monotone solutions}\label{Sec3}

In this section, we will establish Pogorelov estimates for $k$-convex-monotone solutions to the Dirichlet boundary problem of equation \eqref{eq:P-k-HESSIAN}. This is a crucial ingredient to prove Theorem \ref{thm:main}, and is also of independent interest. The idea of the proof is adapted from that of \cite{Chou-Wang 2003}.

For $\lambda=(\lambda_1,\cdots,\lambda_n)$ and $1\leq l\leq n$, we define
\begin{equation*}
  \sigma_{l;i_1\cdots i_j}(\lambda)=
  \begin{cases}
    \sigma_l(\lambda)|_{\lambda_{i_1}=\cdots=\lambda_{i_j}=0}, & \mbox{if } i_r\neq i_s\ \mbox{for all }1\leq r<s\leq j, \\
    0, & \mbox{otherwise}.
  \end{cases}
\end{equation*}

\begin{proof}[Proof of Theorem \ref{thm:Pogorelov}]
  Consider the auxiliary function, for $(x,t)\in\overline{D}$ and $\xi\in\partial B_1$,
  $$\Phi(x,t;\xi)=(w(x,t)-u(x,t))^4\varphi\bigg(\frac{|D u(x,t)|^2}{2}\bigg)u_{\xi\xi}(x,t),$$
  where $\varphi(s)=\big(1-\frac{s}{M}\big)^{-\frac{1}{8}}$ and $M=2\sup_D|Du|^2$. Assume $\Phi$ attains its maximum at $(x_0,t_0;\xi_0)$. It suffices to prove \begin{equation}\label{eq:Phi0}
  \Phi(x_0,t_0;\xi_0)\leq C.
  \end{equation}
  We may assume $(x_0,t_0)\in\overline{D}\setminus\partial_pD$ and $u(x_0,t_0)<w(x_0,t_0)$, as otherwise \eqref{eq:Phi0} is obvious. By an orthogonal argument, we may also assume $\xi_0=(1,0,\cdots,0)$ and $D^2u(x_0,t_0)$ is diagonal with $u_{11}(x_0,t_0)\geq u_{22}(x_0,t_0)\geq\cdots\geq u_{nn}(x_0,t_0)$. It is enough to prove for $u_{11}(x_0,t_0)\geq1$.

  We set
  $$\Psi(x,t)=(w(x,t)-u(x,t))^4\varphi\bigg(\frac{|D u(x,t)|^2}{2}\bigg)u_{11}(x,t)\quad\text{for }(x,t)\in\overline{D}.$$
  Then, at $(x_0,t_0)$,
  \begin{equation}\label{eq:log H-1}
    0=(\log\Psi)_i=4\frac{w_i-u_i}{w-u}+\frac{\varphi_i}{\varphi}+\frac{u_{11i}}{u_{11}},
  \end{equation}
  \begin{equation}\label{eq:log H-2}
    0\leq(\log\Psi)_t=4\frac{w_t-u_t}{w-u}+\frac{\varphi_t}{\varphi}+\frac{u_{11t}}{u_{11}},
  \end{equation}
  \begin{equation}\label{eq:log H-3}
    0\geq(\log\Psi)_{ii}=4\left(\frac{w_{ii}-u_{ii}}{w-u}-\frac{(w_i-u_i)^2}{(w-u)^2}\right)+\frac{\varphi_{ii}}{\varphi}
    -\frac{\varphi_i^2}{\varphi^2}+\frac{u_{11ii}}{u_{11}}-\frac{u_{11i}^2}{u_{11}^2},
  \end{equation}
  for $i=1,2,\cdots,n$. Let $ F(D^2u)=S_k(D^2u)^{\frac{1}{k}}$ and $f=\psi^{\frac{1}{k}}$, then
  \begin{equation}\label{eq:P-Hessian1}
    (-u_t)^{\frac{1}{k}}F(D^2u)=f\quad\text{in }D.
  \end{equation}
  Let $L$ be the linearized operator of the above equation with respect to $u$ at $(x_0,t_0)$, defined by
  $$L=\frac{f(x_0,t_0,u(x_0,t_0))}{ku_t(x_0,t_0)}\frac{\partial}{\partial t}+(-u_t(x_0,t_0))^{\frac{1}{k}}F_{ij}(D^2u(x_0,t_0))D_{ij},$$
  where
  $$F_{ij}=F_{ij}(D^2u)=\frac{\partial F}{\partial u_{ij}}(D^2u).$$
  Since $u$ is $k$-convex-monotone, combining \eqref{eq:log H-2} and \eqref{eq:log H-3}, we have at $(x_0,t_0)$,
  \begin{equation}\label{eq:linear}
    \begin{split}
      0 \geq &\ L(\log\Psi) \\
      = &\ \frac{f}{ku_t}\left(4\frac{w_t-u_t}{w-u}+\frac{\varphi_t}{\varphi}+\frac{u_{11t}}{u_{11}}\right) \\
        & + (-u_t)^{\frac{1}{k}}F_{ii}\left(4\frac{w_{ii}-u_{ii}}{w-u}-4\frac{(w_i-u_i)^2}{(w-u)^2}+\frac{\varphi_{ii}}{\varphi}
         -\frac{\varphi_i^2}{\varphi^2}+\frac{u_{11ii}}{u_{11}}-\frac{u_{11i}^2}{u_{11}^2}\right) \\
      = &\ \frac{f}{ku_t}\bigg(4\frac{w_t-u_t}{w-u}+\frac{\varphi'}{\varphi}\sum_{j=1}^nu_ju_{jt}+\frac{u_{11t}}{u_{11}}\bigg) \\
        & +(-u_t)^{\frac{1}{k}}F_{ii}\bigg(4\frac{w_{ii}-u_{ii}}{w-u}-4\frac{(w_i-u_i)^2}{(w-u)^2}+\frac{\varphi''}{\varphi}u_i^2u_{ii}^2
        +\frac{\varphi'}{\varphi}\bigg(u_{ii}^2+\sum_{j=1}^nu_ju_{iij}\bigg) \\
        &-\frac{\varphi'^2}{\varphi^2}u_i^2u_{ii}^2+\frac{u_{11ii}}{u_{11}}-\frac{u_{11i}^2}{u_{11}^2}\bigg).
    \end{split}
  \end{equation}

  We begin to estimate each term on the right hand side of \eqref{eq:linear}. Differentiating \eqref{eq:P-Hessian1} with respect to $x_\gamma$ gives
  \begin{equation}\label{eq:D1}
    \frac{fu_{\gamma t}}{ku_t}+(-u_t)^{\frac{1}{k}}F_{ij}u_{ij\gamma}=\partial_\gamma f:=f_{x_\gamma}+f_zu_\gamma.
  \end{equation}
  Multiplying \eqref{eq:D1} by $(-u_t)^{-\frac{1}{k}}$, differentiating with respect to $x_\gamma$ again and multiplying $(-u_t)^{\frac{1}{k}}$, we get
  \begin{equation}\label{eq:D2}
  \begin{split}
    & -\left(1+\frac{1}{k}\right)\frac{fu_{\gamma t}^2}{ku_t^2}+\frac{2\partial_\gamma fu_{\gamma t}}{ku_t}+\frac{fu_{\gamma\gamma t}}{ku_t}+(-u_t)^{\frac{1}{k}}F_{ii}u_{ii\gamma\gamma}+(-u_t)^{\frac{1}{k}}F_{ij,rs}u_{ij\gamma}u_{rs\gamma} \\
    & =\partial_{\gamma\gamma}f:=f_{x_\gamma x_\gamma}+f_zu_{\gamma\gamma}+f_{zz}u_\gamma^2.
  \end{split}
  \end{equation}
  From now on, all formulas are assumed to hold at $(x_0,t_0)$. Since $w$ is $k$-convex-monotone, using the fact that $F$ is concave for $k$-convex functions and $F$ is homogeneous of degree $1$, we have $F_{ii}w_{ii}\geq0$ and
  \begin{equation*}
    F_{ii}(w_{ii}-u_{ii})\geq-F_{ii}u_{ii}=-F.
  \end{equation*}
  Then
  \begin{equation}\label{eq:w-est}
    \frac{f}{ku_t}\frac{w_t-u_t}{w-u}+(-u_t)^{\frac{1}{k}}F_{ii}\frac{w_{ii}-u_{ii}}{w-u}\geq-\bigg(1+\frac{1}{k}\bigg)\frac{f}{w-u}.
  \end{equation}
  In view of \eqref{eq:D1}, we have
  \begin{equation}\label{eq:linear-1}
     \frac{f}{ku_t}\sum_{j=1}^nu_ju_{jt}+(-u_t)^{\frac{1}{k}}F_{ii}\sum_{j=1}^nu_ju_{iij}=\sum_{j=1}^n\partial_\gamma fu_j\geq-C.
  \end{equation}
  Here and in the following, we denote $C$ some positive constant depending only on the desired quantities that may change line from line.
  From the calculation in \cite[Section 4]{Chou-Wang 2003}, it follows that
  \begin{equation}\label{eq:linear-3}
    F_{ij,rs}u_{ij\gamma}u_{rs\gamma}\leq-\frac{1}{k}\sum_{i,j=1}^n\sigma_k^{\frac{1}{k}-1}\sigma_{k-2;ij}u_{ij\gamma}^2.
  \end{equation}
  Setting $\gamma=1$ in \eqref{eq:D2}, \eqref{eq:linear-3} and Cauchy inequality yield
  \begin{equation}\label{eq:linear-2}
  \begin{split}
    &\ \frac{fu_{11t}}{ku_tu_{11}}+(-u_t)^{\frac{1}{k}}F_{ii}\frac{u_{11ii}}{u_{11}} \\
    \geq &\ \frac{\partial_{11}f}{u_{11}}+\left(1+\frac{1}{k}\right)\frac{fu_{1t}^2}{ku_t^2u_{11}}-\frac{2\partial_1fu_{1t}}{ku_tu_{11}}
     +(-u_t)^{\frac{1}{k}}\frac{1}{k}\sum_{i,j=1}^n\sigma_k^{\frac{1}{k}-1}\sigma_{k-2;ij}\frac{u_{1ij}^2}{u_{11}} \\
    \geq &\ \frac{\partial_{11}f}{u_{11}}-\frac{1}{k}\bigg(1+\frac{1}{k}\bigg)^{-1}\frac{(\partial_1f)^2}{fu_{11}}+(-u_t)^{\frac{1}{k}}\frac{1}{k}\sum_{i,j=1}^n\sigma_k^{\frac{1}{k}-1}
    \sigma_{k-2;ij}\frac{u_{1ij}^2}
    {u_{11}} \\
    \geq &\ -C+(-u_t)^{\frac{1}{k}}\frac{1}{k}\sum_{i,j=1}^n\sigma_k^{\frac{1}{k}-1}\sigma_{k-2;ij}\frac{u_{1ij}^2}{u_{11}} .
  \end{split}
  \end{equation}
  Substituting \eqref{eq:w-est}, \eqref{eq:linear-1}, \eqref{eq:linear-2} into \eqref{eq:linear}, we obtain
  \begin{equation}\label{eq:linear-4}
     \begin{split}
        0\geq & -4\bigg(1+\frac{1}{k}\bigg)\frac{f}{w-u}+(-u_t)^{\frac{1}{k}}\frac{1}{k}\sum_{i,j=1}^n\sigma_k^{\frac{1}{k}-1}\sigma_{k-2;ij}\frac{u_{1ij}^2}{u_{11}}-C \\
        & +(-u_t)^{\frac{1}{k}}F_{ii}\bigg(-4\frac{(w_i-u_i)^2}{(w-u)^2}+\frac{\varphi''}{\varphi}u_i^2u_{ii}^2+\frac{\varphi'}{\varphi}u_{ii}^2
        -\frac{\varphi'^2}{\varphi^2}u_i^2u_{ii}^2-\frac{u_{11i}^2}{u_{11}^2}\bigg).
     \end{split}
  \end{equation}

  Now we consider two cases as in \cite{Chou-Wang 2003}.

  \noindent\textbf{Case 1.} $u_{kk}\geq\varepsilon u_{11}$, where $\varepsilon>0$ is a small constant to be determined. Note that the second term of the right hand side of \eqref{eq:linear-4} is nonnegative. From \cite[Section 4]{Chou-Wang 2003}, it follows that
  \begin{equation*}\label{eq:linear-6}
    \sum_{i=1}^nF_{ii}u_{ii}^2\geq F_{kk}u_{kk}^2\geq\theta_1\sum_{i=1}^nF_{ii}u_{11}^2.
  \end{equation*}
  Here and in the following, we denote $\theta_j$ $(j\geq1)$ some positive constant depending only on the desired quantities.
  By \eqref{eq:log H-1}, we have
  \begin{equation*}\label{eq:linear-5}
    \frac{u_{11i}^2}{u_{11}^2}=\bigg(4\frac{w_i-u_i}{w-u}+\frac{\varphi_i}{\varphi}\bigg)^2\leq2\bigg(16\frac{(w_i-u_i)^2}{(w-u)^2}
    +\frac{\varphi'^2u_i^2u_{ii}^2}{\varphi^2}\bigg).
  \end{equation*}
  Note that $\frac{\varphi''}{\varphi}-3\frac{\varphi'^2}{\varphi^2}\geq0$. Plugging the estimates above into \eqref{eq:linear-4}, we have
  \begin{equation}\label{eq:linear-7}
  \begin{split}
    0 \geq & -4\bigg(1+\frac{1}{k}\bigg)\frac{f}{w-u}-C \\
    & +(-u_t)^{\frac{1}{k}}F_{ii}\bigg(-36\frac{(w_i-u_i)^2}{(w-u)^2}
    +\bigg(\frac{\varphi''}{\varphi}-3\frac{\varphi'^2}{\varphi^2}\bigg)u_i^2u_{ii}^2+\theta_1\frac{\varphi'}{\varphi}u_{11}^2\bigg) \\
    \geq & -4\bigg(1+\frac{1}{k}\bigg)\frac{f}{w-u}+\theta_2(-u_t)^{\frac{1}{k}}F_{ii}u_{11}^2-\frac{C(-u_t)^{\frac{1}{k}}}{(w-u)^2}F_{ii}-C.
    \end{split}
  \end{equation}
  Multiplying \eqref{eq:linear-7} by $\frac{(w-u)^8\varphi^2}{\theta_2(-u_t)^{\frac{1}{k}}F_{ii}}$, we obtain
  \begin{equation}\label{eq:linear-8}
    \begin{split}
    (w-u)^8\varphi^2u_{11}^2\leq &\ \frac{1}{\theta_2(-u_t)^{\frac{1}{k}}F_{ii}}\bigg(4\bigg(1+\frac{1}{k}\bigg)f(w-u)^7\varphi^2+C(w-u)^8\varphi^2\bigg) \\
    &\ +C(w-u)^6\varphi^2.
    \end{split}
  \end{equation}
  Note that (see \cite[(3.2)]{Chou-Wang 2003})
  \begin{equation}\label{eq:linear-9}
    \sum_{i=1}^nF_{ii}\geq F_{nn}=\frac{1}{k}\sigma_k^{\frac{1}{k}-1}\sigma_{k-1;n}\geq\theta_3\sigma_k^{\frac{1}{k}-1}u_{11}\cdots u_{k-1,k-1}\geq\theta_4\frac{f^{1-k}u_{11}^{k-1}}{(-u_t)^{\frac{1}{k}-1}}.
  \end{equation}
  By \eqref{eq:linear-8} and \eqref{eq:linear-9}, we get
  $$\Psi^2\leq \frac{Cf^{k-1}}{-u_tu_{11}^{k-1}}\bigg(4\bigg(1+\frac{1}{k}\bigg)f(w-u)^7\varphi^2+C(w-u)^8\varphi^2\bigg)+C(w-u)^6\varphi^2\leq C,$$
  which is the desired estimate.

  \noindent\textbf{Case 2.} $u_{kk}\leq\varepsilon u_{11}$. By \eqref{eq:log H-1}, we have
  \begin{equation}\label{eq:linear-10}
    \frac{u_{111}}{u_{11}}=-\bigg(\frac{\varphi_1}{\varphi}+4\frac{w_1-u_1}{w-u}\bigg)\quad\text{and}\quad\frac{w_i-u_i}{w-u}
    =-\frac{1}{4}\bigg(\frac{\varphi_i}{\varphi}+\frac{u_{11i}}{u_{11}}\bigg),
  \end{equation}
  for $i=2,\cdots,n$. Substituting \eqref{eq:linear-10} into \eqref{eq:linear-4}, we obtain
  \begin{equation}\label{eq:linear-11}
     \begin{split}
        0\geq & -4\bigg(1+\frac{1}{k}\bigg)\frac{f}{w-u}+(-u_t)^{\frac{1}{k}}\frac{1}{k}\sum_{i,j=1}^n\sigma_k^{\frac{1}{k}-1}\sigma_{k-2;ij}\frac{u_{1ij}^2}{u_{11}}-C \\
        & +(-u_t)^{\frac{1}{k}}F_{11}\bigg(-4\frac{(w_1-u_1)^2}{(w-u)^2}-\bigg(\frac{\varphi_1}{\varphi}+4\frac{w_1-u_1}{w-u}\bigg)^2\bigg) \\
        & +(-u_t)^{\frac{1}{k}}\sum_{i=2}^nF_{ii}\bigg(-\frac{1}{4}\bigg(\frac{\varphi_i}{\varphi}+\frac{u_{11i}}{u_{11}}\bigg)^2-\frac{u_{11i}^2}{u_{11}^2}\bigg)
        \\
        & +(-u_t)^{\frac{1}{k}}\sum_{i=1}^nF_{ii}\bigg(\frac{\varphi''}{\varphi}u_i^2u_{ii}^2+\frac{\varphi'}{\varphi}u_{ii}^2
        -\frac{\varphi'^2}{\varphi^2}u_i^2u_{ii}^2\bigg) \\
        \geq & \bigg[-4\bigg(1+\frac{1}{k}\bigg)\frac{f}{w-u}+(-u_t)^{\frac{1}{k}}\frac{1}{k}\sum_{i,j=1}^n\sigma_k^{\frac{1}{k}-1}\sigma_{k-2;ij}\frac{u_{1ij}^2}{u_{11}}
        -\frac{3}{2}(-u_t)^{\frac{1}{k}}\sum_{i=2}^nF_{ii}\frac{u_{11i}^2}{u_{11}^2}\bigg] \\
        & +\bigg[(-u_t)^{\frac{1}{k}}\sum_{i=1}^nF_{ii}\bigg(\bigg(\frac{\varphi''}{\varphi}-3\frac{\varphi'^2}{\varphi^2}\bigg)u_i^2u_{ii}^2+\frac{\varphi'}{\varphi}u_{ii}^2\bigg)
        -36(-u_t)^{\frac{1}{k}}F_{11}\frac{(w_1-u_1)^2}{(w-u)^2}\bigg] \\
        & -C \\
        =: &\ \mathrm{\MakeUppercase{\romannumeral1}}_1+\mathrm{\MakeUppercase{\romannumeral1}}_2-C.
  \end{split}
  \end{equation}

  We first estimate $\mathrm{\MakeUppercase{\romannumeral1}}_2$ from below. It is clear that
  \begin{equation}\label{eq:linear-12}
    \mathrm{\MakeUppercase{\romannumeral1}}_2\geq \theta_5(-u_t)^{\frac{1}{k}}F_{11}u_{11}^2-C(-u_t)^{\frac{1}{k}}\frac{F_{11}}{(w-u)^2}
    \geq\frac{\theta_5}{2}(-u_t)^{\frac{1}{k}}F_{11}u_{11}^2,
  \end{equation}
  provided $(w-u)^2u_{11}^2\geq\frac{2C}{\theta_5}$. We may assume that $\Psi(x_0,t_0)$ is so large that $(w-u)^2u_{11}^2\geq\frac{2C}{\theta_5}$, as otherwise $\Psi(x_0,t_0)\leq C$ holds obviously. From \cite[Section 4]{Chou-Wang 2003}, it follows that
  \begin{equation}\label{eq:linear-13}
    \frac{1}{k}\sum_{i,j=1}^n\sigma_k^{\frac{1}{k}-1}\sigma_{k-2;ij}\frac{u_{1ij}^2}{u_{11}}
        -\frac{3}{2}\sum_{i=2}^nF_{ii}\frac{u_{11i}^2}{u_{11}^2}\geq0,
  \end{equation}
  when $\varepsilon>0$ is sufficiently small. Putting \eqref{eq:linear-12} and \eqref{eq:linear-13} into \eqref{eq:linear-11}, we obtain
  \begin{equation}\label{eq:linear-14}
    0\geq-4\bigg(1+\frac{1}{k}\bigg)\frac{f}{w-u}+\frac{\theta_5}{2}(-u_t)^{\frac{1}{k}}F_{11}u_{11}^2-C.
  \end{equation}
  Multiplying \eqref{eq:linear-14} by $(w-u)^4\varphi$, we get
  \begin{equation*}
    0\geq-4\bigg(1+\frac{1}{k}\bigg)f(w-u)^3\varphi+\frac{\theta_5}{2}(-u_t)^{\frac{1}{k}}F_{11}(w-u)^4\varphi u_{11}^2-C(w-u)^4\varphi.
  \end{equation*}
  It follows from \cite[Lemma 3.1]{Chou-Wang 2003} that $\lambda_1\sigma_{k-1;1}(\lambda)\geq\theta_6\sigma_k(\lambda)$. This yields that
  $$F_{11}u_{11}^2=\frac{1}{k}\sigma_k^{\frac{1}{k}-1}\sigma_{k-1;1}u_{11}^2\geq\frac{\theta_6f}{k(-u_t)^{\frac{1}{k}}}u_{11}.$$
  We thus obtain the desired estimate
  $$\Psi\leq\frac{2k}{\theta_5\theta_6f}\bigg(4\bigg(1+\frac{1}{k}\bigg)f(w-u)^3\varphi+C(w-u)^4\varphi\bigg)\leq C.$$
  This completes the proof of Theorem \ref{thm:Pogorelov}.
\end{proof}

\section{Proof of Theorem \ref{thm:main}}\label{Sec4}

In this section, we will prove Theorem \ref{thm:main} by applying the interior estimates established in Section \ref{Sec2} and Section \ref{Sec3}.

To begin with, we recall the Evans-Krylov type theorem below, which will be needed later. Given $0<\alpha<1$ and a domain $D\subset\mathbb{R}^n\times(-\infty,0]$, we denote
$$[u]_{C^\alpha(\overline{D})}=\sup_{(x,t),(y,s)\in D\atop (x,t)\neq(y,s)}\frac{|u(x,t)-u(y,s)|}{(|x-y|^2+|t-s|)^{\frac{\alpha}{2}}},$$
and
$$||u||_{C^{2+\alpha,1+\frac{\alpha}{2}}(\overline{D})}=\sum_{2i+j\leq2}||D^i_tD^j_xu||_{C^0(\overline{D})}+[D^2u]_{C^{\alpha}(\overline{D})}
+[u_t]_{C^\alpha(\overline{D})},$$
where $D^i_tD^j_xu(x,t)$ denotes the $j$-th order derivative with respect to $x$ and $i$-th order derivative with respect to $t$ of $u(x,t)$, $i$ and $j$ are two non-negative integers .
We denote $\mathbb{S}^{n\times n}$ the space of $n\times n$ real symmetric matrices.

\begin{lemma}[\cite{GH-1998}]\label{thm:Evans-Krylov}
  Let $D=B_1\times(-1,0]$ and $u\in C^{4,2}(\overline{D})$ be a solution to
  $$E(u_t,D^2u)=0\quad\text{in }D,$$
  where $E(q,N)$ is defined for all $(q,N)\in\mathbb{R}\times\mathbb{S}^{n\times n}$ with $E(\cdot,N)\in C^{1}(\mathbb{R})$ for each $N\in\mathbb{S}^{n\times n}$, and $E\in C^2(\mathbb{R}^n\times\mathcal{N})$, where $\mathcal{N}$ is a neighborhood of $\{D^2u(x,t):(x,t)\in D\}$. Suppose that
  \begin{itemize}
    \item[(1)] $E$ is uniformly parabolic, i.e. there exist positive constants $\Lambda_1$ and $\Lambda_2$ such that
  \begin{equation}\label{eq:EK-1}
  -\Lambda_2\leq E_q(q,N)\leq-\Lambda_1,
  \end{equation}
  \begin{equation}\label{eq:EK-2}
  \Lambda_1||N_2||\leq E(q,N_1+N_2)-E(q,N_1)\leq\Lambda_2||N_2||,
  \end{equation}
  for all $q\in\mathbb{R}$ and $N_1$, $N_2\in\mathbb{S}^{n\times n}$ with $N_2$ nonnegative definite.
    \item[(2)] $E$ is concave with respect to $N$.
  \end{itemize}
  If $||u||_{C^{2,1}(\overline{D})}\leq K$, then there exists positives constant $C$ depending only on $n$, $\Lambda_1$, $\Lambda_2$, $K$, $E(0,0)$ and $0<\alpha=\alpha(n,\Lambda_1,\Lambda_2)<1$ such that
  $$\|u\|_{C^{2+\alpha,1+\frac{\alpha}{2}}(\overline{D}_{\frac{1}{2}})}\leq C,$$
  where $D_{\frac{1}{2}}=B_{\frac{1}{2}}\times(-\frac{1}{2},0]$.
\end{lemma}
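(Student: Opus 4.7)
The plan is to establish interior $C^{2+\alpha,1+\alpha/2}$ regularity by the classical Evans--Krylov scheme, adapted to the parabolic setting. The concavity hypothesis on $E$ is what enables a H\"older bound on $D^2u$, and then uniform parabolicity lets us recover H\"older regularity of $u_t$ essentially for free: since $E_q\le-\Lambda_1<0$ on the relevant compact set, the implicit function theorem applied to $E(q,N)=0$ produces a Lipschitz function $\Phi$ with $u_t(x,t)=\Phi(D^2u(x,t))$, so $[u_t]_{C^\alpha}\le C[D^2u]_{C^\alpha}$. The task thus reduces to bounding $[D^2u]_{C^\alpha(\overline{D}_{1/2})}$.

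The core linearization step is to differentiate the equation twice in a fixed spatial direction $\xi\in S^{n-1}$. Differentiating $E(u_t,D^2u)=0$ once in $\xi$ gives
$$E_q\,u_{t\xi}+E_{N_{ij}}\,u_{ij\xi}=0,$$
and differentiating a second time, then using concavity of $E$ in $N$ (so that $E_{N_{ij},N_{rs}}u_{ij\xi}u_{rs\xi}\le 0$), yields
$$E_q\,(u_{\xi\xi})_t+E_{N_{ij}}\,(u_{\xi\xi})_{ij}\ge -E_{qq}u_{t\xi}^2-2E_{q,N_{ij}}u_{t\xi}u_{ij\xi}.$$
By the a priori bound $\|u\|_{C^{2,1}}\le K$ and the $C^2$-smoothness of $E$, the right-hand side is uniformly bounded. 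Thus $v:=u_{\xi\xi}$ is a subsolution of a linear uniformly parabolic equation with $L^\infty$ right-hand side and measurable coefficients whose parabolicity constants depend only on $\Lambda_1,\Lambda_2$.

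The main step is the Evans--Krylov envelope argument in its parabolic form. On the parabolic cylinder $Q_r=B_r\times(-r^2,0]$ set
$$M(r)=\sup_{|\xi|=1}\sup_{Q_r} u_{\xi\xi},\qquad m(r)=\inf_{|\xi|=1}\inf_{Q_r} u_{\xi\xi}.$$
For each fixed $\xi$, the nonnegative function $M(r)-u_{\xi\xi}$ is a supersolution of the above linear uniformly parabolic equation, so the parabolic Krylov--Safonov weak Harnack inequality on $Q_r$ produces a lower $L^p$-type bound for $M(r)-u_{\xi\xi}$ over $Q_{r/2}$. The decisive Evans--Krylov observation is that concavity of $E$ in $N$ allows the pointwise supremum $\sup_\xi u_{\xi\xi}$ on $Q_{r/2}$ to be compared with a combination of these lower bounds over a finite $\varepsilon$-net in $S^{n-1}$. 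This yields the oscillation-decay estimate
$$\operatorname*{osc}_{Q_{r/2}} D^2u\le \theta\,\operatorname*{osc}_{Q_r} D^2u + Cr^\beta$$
for some $\theta\in(0,1)$ and $\beta>0$; iterating at dyadic scales produces a H\"older modulus of continuity for $D^2u$. Combined with the $u_t$-comparison from the first paragraph, this gives the claimed $C^{2+\alpha,1+\alpha/2}$ bound.

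The hardest step is the envelope inequality itself: passing from control of $u_{\xi\xi}$ for each single direction $\xi$ to control of the supremum $\sup_\xi u_{\xi\xi}$ genuinely requires concavity of $E$ in $N$, and is the heart of the original Evans--Krylov argument. In the parabolic setting it is carried out in \cite{GH-1998}, which we may therefore invoke directly; all the remaining ingredients (the $\xi$-linearization, the parabolic Krylov--Safonov Harnack inequality, and the dyadic oscillation-decay iteration) are by now standard.
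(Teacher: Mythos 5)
The paper does not prove this lemma at all: it is quoted verbatim from Guti\'errez--Huang \cite{GH-1998}, so your final move of invoking that reference for the hard step is exactly what the paper does. Your surrounding sketch of the Evans--Krylov scheme (linearize in a direction $\xi$, weak Harnack for $M(r)-u_{\xi\xi}$, a finite $\varepsilon$-net on the sphere, dyadic oscillation decay, and the implicit-function reduction $u_t=\Phi(D^2u)$ for the H\"older bound on $u_t$) is the standard and correct outline of what underlies the cited result.

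There is, however, one step in your sketch that does not work as written. After differentiating the equation twice in $\xi$ you keep the terms $-E_{qq}u_{t\xi}^2-2E_{q,N_{ij}}u_{t\xi}u_{ij\xi}$ on the right-hand side and assert they are uniformly bounded using $\|u\|_{C^{2,1}(\overline D)}\le K$ and the $C^2$ bound on $E$. But $u_{t\xi}$ and $u_{ij\xi}$ are \emph{third-order} derivatives of $u$, which are not controlled by the $C^{2,1}$ norm; a priori they can be arbitrarily large, so $u_{\xi\xi}$ is not exhibited as a subsolution with bounded right-hand side by this computation. Note that the lemma assumes concavity of $E$ in $N$ only, not joint concavity in $(q,N)$; under joint concavity the entire quadratic form in $(u_{t\xi},u_{ij\xi})$ would be nonpositive and the issue would vanish. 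With concavity in $N$ alone, the rigorous route (the one used in \cite{GH-1998} and in the standard treatments of $-u_t+F(D^2u)=0$) avoids pointwise twice-differentiation: one applies the two-point concavity inequality $E(q,N_1)-E(q,N_2)\le E_{N_{ij}}(q,N_2)\,(N_1-N_2)_{ij}$ together with $E_q\le-\Lambda_1<0$ to absorb the increment of $u_t$, thereby showing that suitable second-order difference quotients of $u$ are supersolutions of a uniformly parabolic inequality with coefficients frozen at one point; this yields the same oscillation-decay estimate without ever invoking third derivatives. Since you ultimately defer to \cite{GH-1998} the lemma stands, but the twice-differentiated identity as you present it should not be offered as the justification for the subsolution property.
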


We set $E(q,N)=(-q)^{\frac{1}{k}}\sigma_k(\lambda(N))^{\frac{1}{k}}-1$ for $(q,N)\in[-m_2,-m_1]\times\mathcal{N}$, where
$$\mathcal{N}=\bigg\{N\in\mathbb{S}^{n\times n}:
\frac{1}{m_2}\leq\sigma_k(\lambda(N))\leq\frac{1}{m_1},||N||\leq C_0\bigg\},$$
in which $m_1$, $m_2$ and $C_0$ are given positive constant and $m_2\geq m_1$. It is easy to check that \eqref{eq:EK-1} holds in $[-m_2,-m_1]\times\mathcal{N}$. We can also find from \cite{CNS-1985} that \eqref{eq:EK-2} and $(2)$ in Theorem \ref{thm:Evans-Krylov} hold in $[-m_2,-m_1]\times\mathcal{N}$. Due to the smoothness of $E$, we are able to extend $E$ to $\mathbb{R}\times\mathbb{S}^{n\times n}$ such that $E$ meets the assumptions in Lemma \ref{thm:Evans-Krylov}.

Summing up, we have all ingredients to present the proof of Theorem \ref{thm:main}.

\begin{proof}[Proof of Theorem \ref{thm:main}]
  Fix $R_0=\sqrt{2B}$. For $R>R_0$, we define
  $$\Omega_R=\{(x,t)\in\mathbb{R}^n\times(-\infty,0]:u(Rx,R^2t)<R^2\}.$$
  Since $u$ is decreasing in $t$, we see that $\Omega_R(t)$ is nondecreasing in $t$. By virtue of \eqref{eq:quadratic}, we have
  \begin{equation*}\label{eq:Omega_R-bdd}
    B_{\frac{1}{\sqrt{2A_2}}}\subset\Omega_R(0)\subset B_{\frac{1}{\sqrt{A_1}}}.
  \end{equation*}
  It follows from \eqref{eq:u-t-bdd} that, for any $(x,t)\in\Omega_R$,
  \begin{equation*}
  R^2>u(Rx,R^2t)\geq u(Rx,0)-m_1R^2t\geq-m_1R^2t.
  \end{equation*}
  From this,
  \begin{equation*}\label{eq:Omega_R-bdd2}
    t\geq-\frac{1}{m_1},\quad\forall\ (x,t)\in\Omega_R.
  \end{equation*}
  By the above, we conclude that $\Omega_R\subset\subset B_{1+\frac{1}{\sqrt{A_1}}}\times(-1-\frac{1}{m_1},0]$.

  For $R>R_0$, we define
  $$v(x,t)=\frac{u(Rx,R^2t)-R^2}{R^2}.$$
  Then $v\in C^{4,2}(\mathbb{R}^n\times(-\infty,0])$ is $k$-convex-monotone. Moreover, we have
  \begin{equation}\label{eq:v-t-bdd}
  m_1\leq-v_t\leq m_2\quad\text{in }\mathbb{R}^n\times(-\infty,0],\
  \end{equation}
  \begin{equation}\label{eq:v-quadratic}
  A_1|x|^2-1\leq v(x,0)\leq A_2|x|^2-\frac{1}{2},\quad\forall\ x\in\mathbb{R}^n.
  \end{equation}
  It is clear that
  \begin{equation*}
    \begin{cases}
      -v_t\sigma_k(\lambda(D^2v))=1 & \mbox{in } \Omega_R, \\
      v=0 & \mbox{on }\partial_p\Omega_R.
    \end{cases}
  \end{equation*}
  By \eqref{eq:v-t-bdd} and \eqref{eq:v-quadratic}, we get
  \begin{equation}\label{v-bdd}
  -1\leq v(x,t)\leq v(x,0)-m_2t\leq A_2|x|^2-\frac{1}{2}-m_2t\leq C
  \end{equation}
  in $B_{1+\frac{1}{\sqrt{A_1}}}\times(-1-\frac{1}{m_1},0]$. Here and in the following, $C\geq1$ denotes some constant independent of $R$ that may change from line to line. In view of \eqref{v-bdd} and the problem $v$ solves, we apply the gradient estimate in Theorem \ref{thm:gradient-est} to $v$ and obtain
  \begin{equation*}\label{eq:Dv}
    |Dv|\leq C\quad\text{in }\Omega_R.
  \end{equation*}
  For $s<0$, let
  $$O_s=\{(x,t)\in\Omega_R:v(x,t)<s\}.$$
  Taking $s=-\frac{1}{4}$. We apply Pogorelov estimates in Theorem \ref{thm:Pogorelov} to $v$ and obtain
  $$\bigg(v+\frac{1}{4}\bigg)^4|D^2v|\leq C\quad\text{in }O_{-\frac{1}{4}}.$$
  This yields that
  $$|D^2v|\leq C\quad\text{in }O_{-\frac{1}{3}}.$$

  Set $Q=\left\{(x,t)\in\mathbb{R}^n\times(-\infty,0]:|x|<\frac{1}{\sqrt{8A_2}},t>-\frac{1}{48m_2}\right\}$.
  Again by \eqref{eq:v-t-bdd} and \eqref{eq:v-quadratic}, we get
  $$v(x,t)\leq v(x,0)-m_2t\leq A_2|x|^2-\frac{1}{2}-m_2t<-\frac{1}{3}\quad\text{in }Q.$$
  Hence, $Q\subset O_{-\frac{1}{3}}$. It follows from Lemma \ref{thm:Evans-Krylov} that, for some $0<\alpha<1$,
  $$||v||_{C^{2+\alpha,1+\frac{\alpha}{2}}(\overline{Q'})}\leq C,$$
  where $Q'=\left\{(x,t)\in\mathbb{R}^n\times(-\infty,0]:|x|<\frac{1}{\sqrt{10A_2}},\ t>-\frac{1}{50m_2}\right\}$.
  We thus get
  $$[D^2u]_{C^\alpha(\overline{\widetilde{Q}})}=R^{-\alpha}[D^2v]_{C^\alpha(\overline{Q})}\leq CR^{-\alpha},$$
  $$[u_t]_{C^{\alpha}(\overline{\widetilde{Q}})}=R^{-\alpha}[v_t]_{C^\alpha(\overline{Q})}\leq CR^{-\alpha},$$
  where $\widetilde{Q}=\left\{(x,t)\in\mathbb{R}^n\times(-\infty,0]:|x|<\frac{R}{\sqrt{10A_2}},\ t>-\frac{R^2}{50m_2}\right\}$. Since $R>R_0$ is arbitrary, by letting $R\rightarrow\infty$, we obtain
  $$[D^2u]_{C^\alpha(\mathbb{R}^n\times(-\infty,0])}=0,$$
  $$[u_t]_{C^\alpha(\mathbb{R}^n\times(-\infty,0])}=0.$$
  Therefore, $u$ has the desired form. This finishes the proof of Theorem \ref{thm:main}.
\end{proof}

\end{document}